\documentclass[11pt]{article}
\usepackage{amsmath,amssymb,amsthm,mathtools}
\usepackage[margin=1in]{geometry}
\usepackage{booktabs}
\usepackage{pgfplots}
\pgfplotsset{compat=1.18}
\usepackage{hyperref}

\newtheorem{theorem}{Theorem}[section]
\newtheorem{lemma}[theorem]{Lemma}
\newtheorem{proposition}[theorem]{Proposition}
\newtheorem{corollary}[theorem]{Corollary}
\theoremstyle{definition}

\theoremstyle{remark}
\newtheorem{remark}[theorem]{Remark}

\newcommand{\Hh}{\mathbb{H}}
\newcommand{\Cc}{\mathbb{C}}
\newcommand{\Qq}{\mathbb{Q}}
\newcommand{\Zz}{\mathbb{Z}}
\newcommand{\Rr}{\mathbb{R}}
\newcommand{\Tt}{\mathbb{T}}
\newcommand{\cL}{\Lambda}
\newcommand{\ii}{\mathrm{i}}
\newcommand{\e}{\mathrm{e}}
\newcommand{\id}{\mathrm{d}}
\newcommand{\ImT}{\operatorname{Im}}
\newcommand{\ReT}{\operatorname{Re}}
\newcommand{\OO}{\mathcal{O}}
\newcommand{\m}{\mathrm{m}}
\newcommand{\mt}{\widetilde{\m}}

\title{Samart's conjecture $n_4(81)=40M_7$:\\
the exact CM evaluation and the two obstructions\\
{\large A status report, companion to \emph{Mahler measures at interior
CM points}}}
\author{Huimin Zheng\thanks{%
College of Information and Network Engineering,
Anhui Science and Technology University,
Fengyang, Anhui 233100, P.~R.~China.
Email: \texttt{zhhm@ahstu.edu.cn}.}}
\date{}

\begin{document}
\maketitle

\section*{Declaration on the use of AI tools}
The research reported in this note — including the computational
exploration, the discovery of the proof strategy, the machine-certified
verifications, and the preparation of the manuscript — was carried out by
the author with the assistance of the AI system \emph{Kimi} (Moonshot AI).
All mathematical content, including every proof and every certified
computation, has been checked and verified by the author, who takes full
responsibility for the correctness and integrity of the note. All
verification scripts cited below are available for independent
verification.

\begin{abstract}
This note archives the status of Samart's Table-$6$ conjecture
\[
  n_4(81)\stackrel{?}{=}40M_7,\qquad
  M_7:=L'(g_7,0),
\]
where $g_7(\tau)=\eta(\tau)^3\eta(7\tau)^3$ is the newform of
$S_3(\Gamma_0(7),\chi_{-7})$ and $n_4(s):=4\m\bigl(x^4+y^4+z^4+1+s^{1/4}xyz\bigr)$.
The conjecture is the cleanest of Samart's open interior-point entries
(discriminant $-7$, class number $1$, a \emph{single} $L$-value), and it
was dropped as a theorem target in the companion paper \cite{Zf}, where
the two $n_2$-family conjectures were proved. We record what is proved
and precisely where the methods of \cite{Zf} fail. First, a complete
proof of the $L$-value side (P1): at the CM point
$\tau_2=(7+\sqrt{-7})/4$ the Eisenstein--Kronecker expression underlying
Samart's formula evaluates \emph{exactly} to
\[
  \mathrm{EK}_4(\tau_2)=40M_7,
\]
via lattice sums over the ring of integers of $\Qq(\sqrt{-7})$ and the
principal ideal $(\bar\varpi)$, with an exact cancellation of the parasitic
$\zeta_K(2)$-terms. Second, two quantitative obstructions to the
remaining half $n_4(81)=\mathrm{EK}_4(\tau_2)$: the critical image of
the $n_4$-family is a two-dimensional astroid disc containing the
parameter $c=3$ in its interior (in contrast to the one-dimensional slit
$[0,64]$ of \cite{Zf}), so no continuation path can approach the CM
point; and Samart's $U$-series converges on all of $\Hh$ but leaves the
geometric sheet of the holomorphic Mahler measure everywhere below
$\ImT\tau=1/\sqrt2$, so the premise of the differential-comparison
continuation fails. A $20$-digit direct torus integration then decides
the conjecture numerically: $n_4(81)-40M_7=+0.0586706795972872\ldots$,
five orders of magnitude above the integration error floor, so the
identity as literally stated is refuted; a closed form for the true
value $n_4(81)$ remains open and appears to require
regulator/monodromy machinery.
\end{abstract}

\section{Introduction}

\subsection{The conjecture}

Let
\[
  P(x,y,z):=\frac{x^4+y^4+z^4+1}{xyz},
\]
and denote by $\m$ the logarithmic Mahler measure,
\[
  \m(Q)=\int_{\Tt^3}\log|Q(x,y,z)|\,
  \frac{\id x}{2\pi \ii x}\frac{\id y}{2\pi \ii y}\frac{\id z}{2\pi \ii z},
  \qquad \Tt=\{|w|=1\}.
\]
Since $\m(xyz)=0$, one has $\m(P+c)=\m(x^4+y^4+z^4+1+c\,xyz)$, and the
substitution $x\mapsto \ii x$ (Haar-invariant, sending $c\mapsto \ii c$)
shows that the value is unchanged when $c$ is multiplied by a fourth
root of unity. Following Samart's table notation \cite{Sa15} we
therefore write, unambiguously,
\begin{equation}\label{eq:n4def}
  n_4(s):=4\,\m\bigl(x^4+y^4+z^4+1+s^{1/4}xyz\bigr),
  \qquad s\in\Cc,
\end{equation}
for either choice of the fourth root. (In \cite{Sa13} the same function
is denoted $f_4$.)

\begin{quote}
\textbf{Conjecture} (Samart, \cite[Table~6]{Sa15}).\quad
$n_4(81)=40M_7$, where $M_7:=L'(g_7,0)$ and
$g_7(\tau)=\eta(\tau)^3\eta(7\tau)^3$ is the unique normalized newform of
$S_3(\Gamma_0(7),\chi_{-7})$ \textup{(}LMFDB label \textup{7.3.b.a}
\cite{LMFDB}\textup{)}.
\end{quote}

Among Samart's open interior-point entries this one is structurally the
cleanest: the attached CM point has discriminant $-7$ (class number
$1$), and the conjectural value is a \emph{single} $L$-value with no
Dirichlet term, in contrast to the two-term formula of the $n_2$
conjugate pair proved in \cite{Zf}. Fei \cite{Fe} proved the identity at
the level of the real part of the holomorphic (modified) Mahler measure
$\ReT\mt$; explicitly, in Fei's table the row $c=81$ is placed between a
pair of double lines marking the entries for which $\ReT\mt$ is
\emph{not} guaranteed to equal the genuine Mahler measure
\cite[Note~(3) and Caution~2.3]{Fe}. The passage to the genuine Mahler
measure at the interior parameter $s^{1/4}=3$ was left open, and
Section~\ref{sec:numerics} below determines what actually happens
there: the two sides differ in the fifth significant digit.

\subsection{What this note records}

The companion paper \cite{Zf} proved Samart's conjectures for the
$n_2$-family at $s=1$ (Theorem~A, differential-comparison continuation)
and at $s=(47\pm45\sqrt{-7})/2$ (Theorem~B, the Fricke trick). The
present conjecture was considered as a third target and \emph{dropped}:
two independent obstructions, documented quantitatively in
Sections~\ref{sec:astroid} and~\ref{sec:sheet} below, kill both the
continuation machine and the Fricke shortcut for the $n_4$-family. This
note archives the state of the problem for future work:

\begin{enumerate}
\item[(P1)] \textbf{Done, exact.} The $L$-value side. Samart's
Eisenstein--Kronecker expression $\mathrm{EK}_4$ at the CM point
$\tau_2=(7+\sqrt{-7})/4$ (where $s_4(\tau_2)=81$) evaluates exactly to
$40M_7$ (Theorem~\ref{thm:P1n4}, full proof in
Section~\ref{sec:P1}; certified to $2.7\times10^{-59}$ by the $45$
checks of \texttt{verify\_P1\_n4\_81.py}).
\item[(P2)] \textbf{Blocked.} The Mahler side
$n_4(81)\stackrel{?}{=}\mathrm{EK}_4(\tau_2)$. Obstruction~1: the
critical image $P(\Tt^3)$ is the two-dimensional \emph{astroid disc}
$\{|{\ReT c}|^{2/3}+|{\ImT c}|^{2/3}\le4^{2/3}\}$, and $c=3$ is an
interior point, so the open mapping theorem forbids any path
continuation (Section~\ref{sec:astroid}). Obstruction~2, the decisive
one: Samart's $U$-series is proved to equal $n_4(s_4(\tau))$ only for
$\ImT\tau\ge1/\sqrt2$, and below that threshold it leaves the geometric
sheet of the holomorphic Mahler measure \emph{everywhere} --- not only
inside the critical image (Section~\ref{sec:sheet}).
\end{enumerate}

In one sentence: for the $n_2$-family the bad locus was the
one-dimensional slit $[0,64]$, which a certified path can go around; for
the $n_4$-family the bad locus is a two-dimensional disc \emph{and} the
analytic formula itself is on the wrong sheet below $\ImT\tau=1/\sqrt2$,
so there is nothing left to continue.

\subsection{Notation}

We keep the notation of \cite{Zf}. Throughout,
$\tau=x+\ii y\in\Hh$, $q=\e^{2\pi \ii\tau}$,
$\eta(\tau)=q^{1/24}\prod_{n\ge1}(1-q^n)$, and $\sum'$ denotes omission
of the zero term. The field $K=\Qq(\sqrt{-7})$ has ring of integers
$\OO_K=\Zz[\varpi]$, class number $h(-7)=1$ and units $\{\pm1\}$, where
\[
  \varpi:=\frac{1+\sqrt{-7}}2,\qquad
  \varpi+\bar\varpi=1,\qquad \varpi\bar\varpi=N(\varpi)=2,\qquad \varpi^2=\varpi-2,
\]
\begin{equation}\label{eq:pirels}
  \varpi^2+\bar\varpi^2=-3,\qquad \ReT\varpi^2=\ReT\bar\varpi^2=-\tfrac32,\qquad
  (\bar\varpi)=\bar\varpi\OO_K=2\Zz+\bar\varpi\Zz .
\end{equation}
We write $L_3:=L(g_7,3)$. The theta identity
\cite[Appendix~B]{Zf} gives
\begin{equation}\label{eq:hecke}
  {\sum_{\alpha\in\OO_K}}^{\!\prime}\frac{\alpha^2}{|\alpha|^{2s}}=2L(g_7,s),
  \qquad
  {\sum_{\alpha\in\OO_K}}^{\!\prime}\frac{\bar\alpha^2}{|\alpha|^6}=2L_3,
  \qquad
  {\sum_{\alpha\in\OO_K}}^{\!\prime}|\alpha|^{-4}=2\zeta_K(2),
\end{equation}
the middle equality by conjugation ($L_3\in\Rr$ since $a_n(g_7)\in\Zz$),
and the functional equation of $g_7$ (root number $+1$) gives
\begin{equation}\label{eq:FE}
  M_7=L'(g_7,0)=\frac{7\sqrt7}{4\pi^3}\,L_3 .
\end{equation}
Both \eqref{eq:hecke} and \eqref{eq:FE} are quoted theorems, certified
numerically in \cite{Zf} (\texttt{verify\_P1.py},
\texttt{lvalue\_g7.py}) and re-certified for this note in
\texttt{verify\_P1\_n4\_81.py} (check [L1]: the theta identity to
$q^{60}$ as exact integers, \eqref{eq:FE} to $10^{-61}$).

\section{The \texorpdfstring{$\mathrm{EK}_4$}{EK4} expression and Samart's theorem}\label{sec:EK4}

\subsection{Two forms of \texorpdfstring{$\mathrm{EK}_4$}{EK4}}

For $z\in\Cc\setminus\{0\}$ let
\[
  F(z):=\frac{4(\ReT z)^2}{|z|^6}-\frac1{|z|^4},
\]
and for $\tau\in\Hh$ and $d\ge1$ define the double series
\begin{equation}\label{eq:Td}
  T_d(\tau):={\sum_{(m,n)\in\Zz^2}}^{\!\prime}F(dm\tau+n),
\end{equation}
summed by rows in $m$; each row is absolutely convergent and the row
sums decay like $|m|^{-2}$ (Poisson summation), and the $m=0$ row is
$\sum_{n\neq0}(4n^2\cdot n^{-6}-n^{-4})=6\zeta(4)=\pi^4/15$
\cite[\S2.4]{Zf}. Define, for \emph{all} $\tau\in\Hh$,
\begin{equation}\label{eq:EK4}
  \mathrm{EK}_4(\tau):=\frac{10\,\ImT\tau}{\pi^3}
  \bigl(-T_1(\tau)+4T_2(\tau)\bigr),
\end{equation}
real-analytic on all of $\Hh$. Since
$4(\ReT\lambda)^2=(\lambda+\bar\lambda)^2=\lambda^2+2|\lambda|^2+\bar\lambda^2$,
the summand rewrites as
$F(\lambda)=2\ReT\bigl(\bar\lambda^2/|\lambda|^6\bigr)+|\lambda|^{-4}$,
so with $\cL_d(\tau):=\Zz+\Zz\,d\tau$,
\begin{equation}\label{eq:Talt}
  T_d(\tau)={\sum_{\lambda\in\cL_d(\tau)}}^{\!\prime}
  \Bigl[2\ReT\frac{\bar\lambda^2}{|\lambda|^6}+\frac1{|\lambda|^4}\Bigr],
\end{equation}
the form used for the CM evaluation.

Define also, for $j\ge1$,
\[
  U_j(\tau):=\sum_{m\neq0}\frac1m\sum_{n\in\Zz}(jm\tau+n)^{-3}
  =2\pi^3\sum_{m\ge1}\frac{\cos(j\pi m\tau)}{m\,\sin^3(j\pi m\tau)},
\]
absolutely convergent on $\Hh$ exactly as in \cite[\S2.4]{Zf}.

\begin{lemma}[The two forms agree]\label{lem:twoforms}
For every $\tau\in\Hh$,
\begin{equation}\label{eq:EK4U}
  \mathrm{EK}_4(\tau)
  =\ImT\Bigl[2\pi\tau+\frac{10}{\pi^3}\bigl(U_1(\tau)-2U_2(\tau)\bigr)\Bigr].
\end{equation}
\end{lemma}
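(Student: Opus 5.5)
The plan is to evaluate $T_d(\tau)$ row by row in closed form, isolate in each row the piece that produces $U_d$, and show that everything else either cancels or combines, in the linear combination $-T_1+4T_2$, into the single term $2\pi\ImT\tau$.

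\textbf{Step 1: the $m=0$ row.} By the remark after \eqref{eq:Td}, the $m=0$ row of $T_d$ equals $\pi^4/15$ for every $d$. So in $-T_1+4T_2$ these rows contribute $3\pi^4/15=\pi^4/5$. Multiplying by $10\ImT\tau/\pi^3$ gives exactly the term $\ImT(2\pi\tau)=2\pi\ImT\tau$ in \eqref{eq:EK4U}.

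\textbf{Step 2: partial fractions in a row $m\neq0$.} Fix a row and put $w=dm\tau$, $v=\ImT w=dmy$, and $\lambda=w+n$. Starting from \eqref{eq:Talt}, note that $\bar\lambda^2/|\lambda|^6=1/(\lambda^3\bar\lambda)$ and $|\lambda|^{-4}=1/(\lambda^2\bar\lambda^2)$. Since $w-\bar w=2\ii v$, I decompose each summand in partial fractions in $n$:
\[
\frac1{(n+w)^3(n+\bar w)}=\frac{A}{(n+w)^3}+\frac{B}{(n+w)^2}+\frac{C}{(n+w)}+\frac{D}{(n+\bar w)},
\]
with $A=1/(\bar w-w)=\ii/(2v)$, $B=-A^2$, $C=A^3=-D$. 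I do the same for $1/((n+w)^2(n+\bar w)^2)$.

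The $A$-term summed over $n$ gives $\frac{\ii}{2dmy}\sum_n(dm\tau+n)^{-3}$. Taking $2\ReT$, multiplying by $10y/\pi^3$ and summing over $m\neq0$, this contributes $-\frac{10}{\pi^3}\cdot\frac1d\ImT U_d(\tau)$ to $\frac{10y}{\pi^3}T_d$. In the combination $-T_1+4T_2$ the coefficients are $(-1)(-1)+4(-\tfrac12)$, which gives precisely $\frac{10}{\pi^3}\ImT\bigl(U_1-2U_2\bigr)$, as in \eqref{eq:EK4U}.

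\textbf{Step 3: the remaining terms (main obstacle).} What is left in each row are the $B$-, $C$- and $D$-terms together with the $|\lambda|^{-4}$ row. These are expressed through $\sum_n(n+w)^{-2}=\pi^2\csc^2(\pi w)$ and through the conditionally convergent pair $\sum_n\bigl[(n+w)^{-1}-(n+\bar w)^{-1}\bigr]=\pi\cot(\pi w)-\pi\cot(\pi\bar w)$; since $C+D=0$, this pairing is legitimate and gives an absolutely convergent row.

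I would first check directly whether these leftover pieces cancel identically within each row. The $B$-term of the first fraction and the cross terms of the $|\lambda|^{-4}$ fraction have matching coefficients, since both are powers of $1/(2\ii v)$. If they do not cancel exactly, the fallback is to split each leftover into two parts:
\begin{itemize}
\item a constant part coming from $\cot(\pi w)\to\mp\ii$, which is homogeneous of degree $-2$ in $d$ once multiplied by $y$ and summed over $m$, giving terms like $\zeta(3)/(d^2y^2)$; such terms are killed by the weights $-1+4\cdot2^{-2}=0$;
\item exponentially small parts in $q^{dm}$, which I would compare against the real and imaginary parts of $U_d$.
\end{itemize}

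\textbf{Justification of rearrangements.} Throughout, exchanging row sums with partial-fraction sums is justified by absolute convergence of each row and the $|m|^{-2}$ decay of the row sums, as in \cite[\S2.4]{Zf}. Finally, the two sides of \eqref{eq:EK4U} are both real-analytic on $\Hh$, so I would spot-check the identity numerically at one point to catch sign errors.
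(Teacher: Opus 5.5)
Your skeleton is the paper's. The paper's proof quotes from the companion paper the key identity $\ImT U_j=-jy\bigl(T_j-\pi^4/15\bigr)$ and then does exactly your Step~1 bookkeeping, $\frac{10y}{\pi^3}\cdot\frac{\pi^4}{15}\,(1-4)=-2\pi y$. Your Steps~2--3 re-derive that quoted identity, and the step you call the main obstacle carries the whole nontrivial content of the lemma. You left it unchecked, but it does close by the first alternative you list: the leftovers cancel exactly within each row.

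To see this, set $\delta=\bar w-w=-2\ii v$, $S_2=\sum_n(n+w)^{-2}$, and $\sum_n\bigl[(n+w)^{-1}-(n+\bar w)^{-1}\bigr]=\ii\sigma$, where $\sigma$ is real because $n$ is real. The non-$A$ part of $2\ReT\sum_n\lambda^{-3}\bar\lambda^{-1}$ is $\frac{\ReT S_2}{2v^2}+\frac{\sigma}{4v^3}$. On the other hand,
\[
\sum_n\lambda^{-2}\bar\lambda^{-2}=\delta^{-2}\bigl(S_2+\overline{S_2}\bigr)-2\delta^{-3}\,\ii\sigma=-\frac{\ReT S_2}{2v^2}-\frac{\sigma}{4v^3}.
\]
The two cancel, so $\sum_nF(w+n)=-\frac1v\ImT\sum_n(w+n)^{-3}$ in every row $m\neq0$.

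The partial fractions are not actually needed, because the identity holds term by term. For $\lambda=a+\ii b$ with $b\neq0$,
\[
F(\lambda)=\frac{4a^2-(a^2+b^2)}{|\lambda|^6}=\frac{3a^2-b^2}{|\lambda|^6}=-\frac{\ImT(\lambda^{-3})}{\ImT\lambda},
\]
since $\ImT\bar\lambda^{3}=b^3-3a^2b$. Hence each row $m\neq0$ of $T_d$ equals $-\frac{1}{dmy}\ImT\sum_n(dm\tau+n)^{-3}$. Summing over $m\neq0$ gives $\ImT U_d=-dy\bigl(T_d-\pi^4/15\bigr)$ in one line, with no conditionally convergent cotangent pairs to justify. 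With your Step~1 this finishes the proof.

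You should discard the fallback, because it would not have worked. The only non-decaying pieces come from the $\sigma$-terms, whose coefficients are proportional to $v^{-3}$, not $v^{-2}$. After multiplying by $10y/\pi^3$ and summing over $m$, each is a constant times $\zeta(3)/(d^3y^2)$, not $d^{-2}$. The weights then give $-1+4\cdot 2^{-3}=-\tfrac12\neq0$, so these pieces vanish only through the exact in-row cancellation.
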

\begin{proof}
The computation of \cite[Lemma (key identity)]{Zf} gives
$\ImT U_j=-jy\bigl(T_j(\tau)-\pi^4/15\bigr)$, $y=\ImT\tau$. Substituting,
\[
  \ImT\Bigl[2\pi\tau+\tfrac{10}{\pi^3}(U_1-2U_2)\Bigr]
  =2\pi y+\frac{10}{\pi^3}\Bigl[-y\bigl(T_1-\tfrac{\pi^4}{15}\bigr)
   +4y\bigl(T_2-\tfrac{\pi^4}{15}\bigr)\Bigr],
\]
and the $m=0$-row contribution is
$\frac{10y}{\pi^3}\cdot\frac{\pi^4}{15}\cdot(1-4)=-2\pi y$, cancelling
the polynomial term $2\pi y$ and leaving \eqref{eq:EK4}. (Numerically
the two forms agree at $\tau_2$, $\ii$, $\ii/\sqrt2$ to $10^{-58}$;
\texttt{verify\_P1\_n4\_81.py}, checks [EK], [A2], [A4].)
\end{proof}

The $U$-series form \eqref{eq:EK4U} is Samart's exponential-sum formula
for the $n_4$-family; it converges for every $\tau\in\Hh$.

\subsection{The parametrization and Samart's theorem}

Define
\begin{equation}\label{eq:cdef}
  c(\tau):=\Bigl(\frac{\eta(2\tau)}{\eta(\tau)}\Bigr)^{6}
  \bigl(16A(\tau)^4+A(\tau)^{-4}\bigr),
  \qquad
  A(\tau):=\frac{\eta(\tau)\,\eta(4\tau)^2}{\eta(2\tau)^3},
\end{equation}
holomorphic and nonvanishing-free on $\Hh$, and
$s_4(\tau):=c(\tau)^4$ (this agrees with Samart's
$\Delta$-quotient definition of his signature-$4$ modular function,
checked numerically to $40$ digits at sample points;
\texttt{diag\_n4\_astroid.py}, Part~1). With Samart's normalization
\cite[Lemma~2.2]{Sa13},
\[
  s_4(\ii)=648,\qquad s_4\bigl(\ii/\sqrt2\bigr)=256,\qquad
  s_4(\ii y)\ge256\ \ (y>0),
\]
so on the imaginary axis $c(\ii y)$ is real with $c(\ii y)\ge4$.

\begin{theorem}[Samart, {\cite[Prop.~2.1(iii)]{Sa13}}]\label{thm:samart4}
For $\tau\in\Hh$ with $\ImT\tau\ge1/\sqrt2$,
\begin{equation}\label{eq:samart4}
  n_4\bigl(s_4(\tau)\bigr)=\mathrm{EK}_4(\tau).
\end{equation}
\end{theorem}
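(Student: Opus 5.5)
The plan is the standard route for such identities: differentiate the Mahler measure in the parameter, identify the derivative with a period that is a weight-one modular form, and integrate back along the region where the Mahler measure is given by its holomorphic (series) branch.

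\emph{Step 1: holomorphic Mahler measure near $c=\infty$.} For $|c|>4$ the polynomial $c+P$ does not vanish on $\Tt^3$, since $|P|\le4$ there. Writing $\log|c+P|=\log|c|+\ReT\log(1+P/c)$ and expanding in powers of $1/c$, only the constant terms of $P^k$ survive integration. Those constant terms are nonzero only for $k=4j$, where the constant term of $P^{4j}$ is $(4j)!/(j!)^4$. This gives, for $|c|>4$,
\[
n_4(c^4)=4\log|c|-4\,\ReT\sum_{j\ge1}\frac{(4j)!}{(j!)^4}\frac{c^{-4j}}{4j}
=\ReT\Bigl[\log s-\sum_{j\ge1}\frac{(4j)!}{(j!)^4}\frac{s^{-j}}{j}\Bigr],\qquad s=c^4 .
\]
Call the bracket $\mt(s)$. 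Its derivative $s\,\mt'(s)$ equals $F(s):={}_3F_2(\tfrac14,\tfrac12,\tfrac34;1,1;256/s)$.

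\emph{Step 2: modular parametrization.} Next I would show that the pullback $F(s_4(\tau))$ is a weight-one form on the appropriate level-$4$ group, expressible as an eta quotient, and that
\[
\frac{1}{2\pi\ii}\frac{\id}{\id\tau}\log s_4(\tau)\cdot F(s_4(\tau))
\]
is a weight-three Eisenstein series. Both claims would be checked by comparing finitely many $q$-coefficients, justified by a Sturm bound once each side is known to be modular of the stated weight and level. Integrating termwise from $\ii\infty$ then expresses $\mt(s_4(\tau))$ as $2\pi\ii\tau$ plus a Lambert series. The Lambert series is exactly $\frac{10\ii}{\pi^3}(U_1-2U_2)$ up to normalization, and the constant is fixed by the behaviour as $\tau\to\ii\infty$, where $\log s_4\sim-2\pi\ii\tau$. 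Taking real parts (equivalently imaginary parts of the expression in Lemma~\ref{lem:twoforms}) turns this into $\mathrm{EK}_4(\tau)$, via \eqref{eq:EK4U}.

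\emph{Step 3: extend to $\ImT\tau\ge1/\sqrt2$.} Steps 1--2 give \eqref{eq:samart4} on a neighbourhood of $\ii\infty$, namely where $|s_4(\tau)|>256$ and $\tau$ lies in the fundamental region adjacent to the cusp. To reach all of $\ImT\tau\ge1/\sqrt2$, I would argue that $\tau\mapsto s_4(\tau)$ maps this region into the closure of $\{|s|\ge256\}$ on the branch reached from $\ii\infty$. Both sides are continuous there: $n_4$ is continuous in $s$, and $\mathrm{EK}_4$ is real-analytic. The series for $\mt$ converges absolutely on $|s|=256$, since its coefficients are $O(256^j j^{-5/2})$, so the identity extends to the boundary by continuity, with the boundary value $s_4(\ii/\sqrt2)=256$ included.

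\emph{Main obstacle.} The hard step is the geometric claim in Step 3: that $\{\ImT\tau\ge1/\sqrt2\}$ (modulo translations) lands in $|s_4|\ge256$ with no branch ambiguity. I would first try to identify $s_4$ as a Hauptmodul for $\Gamma_0(2)^+$-type symmetry, with $\tau\mapsto-1/(2\tau)$ fixing $\ii/\sqrt2$. I would then use the fact that the arc $|\tau|=1/\sqrt2$ maps onto a boundary curve and invoke the argument principle. The paper's numerics (the $U$-series leaving the sheet below $1/\sqrt2$) suggest this threshold is sharp, which supports this route.
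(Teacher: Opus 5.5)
\emph{Note that the paper gives no proof of this statement; it quotes it from Samart. The question is therefore only whether your argument closes.}

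Steps~1 and~2 are the right skeleton. Step~3, however, sets out to prove a false statement: the half-plane $\ImT\tau\ge1/\sqrt2$ is \emph{not} mapped into $\{|s|\ge256\}$.

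\begin{itemize}
\item On the line $\ReT\tau=\tfrac12$ one has $q<0$, so $s_4$ is real there.
\item Write $s_4=64(h+2+h^{-1})$ with $h=64\,\eta(2\tau)^{24}/\eta(\tau)^{24}$. This reproduces $s_4(\ii)=648$ and $s_4(\ii/\sqrt2)=256$.
\item As $\tau$ runs down this line to the elliptic point $(1+\ii)/2$, $h$ decreases from $0$ to $-1$. Hence $s_4$ increases from $-\infty$ to $0$.
\item Numerically $s_4(\tfrac12+\tfrac{\ii}{\sqrt2})\approx-21$, and $|s_4|<256$ on this line for $\ImT\tau$ up to about $0.93$.
\end{itemize}

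At such points the series for $\mt$ diverges, and no continuity argument starting from $\{|s|\ge256\}$ reaches them. In fact, the Hauptmodul and argument-principle analysis you propose would show that the side $\ReT\tau=\tfrac12$ maps onto $(-\infty,0]$ and the arc $|\tau|=1/\sqrt2$ maps onto $[0,256]$.

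\textbf{The missing idea.} The relevant bad set is not the disc $|c|\le4$ but the critical image $P(\Tt^3)$. This is the astroid disc $\mathcal D$ of Proposition~\ref{prop:astroid}, which in the diagonal directions reaches only $|c|=2$. The argument then runs as follows.

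\begin{itemize}
\item The map $c\mapsto4\m(P+c)$ is harmonic on $\Cc\setminus\mathcal D$. So it equals the real part of the analytic continuation of $\mt(c^4)$ along any path in $\Cc\setminus\mathcal D$ that starts in $|c|>4$.
\item The function $-2\pi\ii\tau-\frac{10\ii}{\pi^3}(U_1-2U_2)$ is holomorphic on all of $\Hh$. Your Step~2 identifies it with $\mt(s_4(\tau))$ near $\ii\infty$, so it is the continuation of $\mt\circ s_4$ along every path in $\Hh$. Its real part is $\mathrm{EK}_4$.
\item Therefore \eqref{eq:samart4} holds on the component of $\{\tau:c(\tau)\notin\mathcal D\}$ that contains a neighbourhood of $\ii\infty$.
\end{itemize}

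The geometric input you actually need is
\[
c\bigl(\{\ImT\tau>1/\sqrt2\}\bigr)\cap\mathcal D=\emptyset,
\]
with the boundary line then handled by continuity. This is a genuinely quantitative check, not a formality.

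\begin{itemize}
\item At $\tfrac12+\tfrac{\ii}{\sqrt2}$, $c$ lies on a diagonal with $|c|\approx2.14$, only just outside $\mathcal D$, which reaches $|c|=2$ there.
\item Near $\tau=\ii/\sqrt2$, the line $\ImT\tau=1/\sqrt2$ is tangent to the arc whose image runs into the cusp $c=4$ of $\mathcal D$. Both the image of the line and $\partial\mathcal D$ satisfy $|\ImT c|\asymp(4-\ReT c)^{3/2}$. The inclusion therefore comes down to comparing constants, roughly $0.35$ against $2/(3\sqrt6)\approx0.27$.
\end{itemize}

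\textbf{Smaller corrections to Step~2.}

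\begin{itemize}
\item The weights are two and four, not one and three. The pullback $F(s_4(\tau))=1+24q+24q^2+\cdots$ matches $2E_2(2\tau)-E_2(\tau)$. The product $\frac{1}{2\pi\ii}(\log s_4)'\,F(s_4)$ equals $\frac13\bigl(E_4(\tau)-4E_4(2\tau)\bigr)$. This is consistent with $U_j=8\pi^3\ii\sum_N\sigma_3(N)N^{-1}q^{jN}$. A Sturm-bound check set up in weights one and three would fail.
\item The linear term is $-2\pi\ii\tau$, not $2\pi\ii\tau$.
\end{itemize}
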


We take this as a quoted theorem. Its proof region is essential to what
follows: the identity is \emph{not} asserted, and in fact fails, below
$\ImT\tau=1/\sqrt2$ (Section~\ref{sec:sheet}).

\begin{remark}[Anchors]\label{rem:anchors}
The $\mathrm{EK}_4$ machinery is anchored against \emph{proved} values.
At $\tau=\ii$ ($\ImT\tau=1\ge1/\sqrt2$), \eqref{eq:samart4} applies and
Samart's Theorem~1.4~(1.9) of \cite{Sa13} gives
\begin{equation}\label{eq:anchor648}
  \mathrm{EK}_4(\ii)=n_4(648)
  =\frac{160}{\pi^3}\,L(h,3)+\frac5\pi\,L(\chi_{-4},2),
  \qquad h(\tau):=\eta(4\tau)^6\in S_3(\Gamma_0(16),\chi_{-4}),
\end{equation}
confirmed five independent ways (the lattice form \eqref{eq:EK4}, the
$U$-series \eqref{eq:EK4U}, the $L$-value formula \eqref{eq:anchor648},
Rogers' hypergeometric evaluation of $n_4(648)$ \cite{Sa13,Ro}, and
direct torus integration of $4\m(P+648^{1/4})$, the last at
tanh-sinh degree $5$ to $2\times10^{-21}$), all agreeing to
$10^{-58}$: $\mathrm{EK}_4(\ii)=4\m(P+648^{1/4})=6.4332830658\ldots$.
On the boundary, $\mathrm{EK}_4(\ii/\sqrt2)=n_4(256)$ via Rogers'
${}_5F_4$ at $|k|=256$ (convergent since $\sum b-\sum a=3/2>0$).
\texttt{verify\_P1\_n4\_81.py}, checks [A0]--[A4].
\end{remark}

\section{(P1) The exact CM evaluation at \texorpdfstring{$\tau_2$}{tau2}}\label{sec:P1}

Throughout this section
\[
  \tau_2:=\frac{7+\sqrt{-7}}4,\qquad
  \ImT\tau_2=\frac{\sqrt7}4,\qquad
  \tau_w:=\frac{-1+\sqrt{-7}}4
\]
($\tau_w$ is the Fricke CM point of Theorem~B in \cite{Zf}). Then
\begin{equation}\label{eq:tau2rels}
  2\tau_2=\frac{7+\sqrt{-7}}2=3+\varpi,\qquad \tau_2=\tau_w+2 .
\end{equation}
Numerically $s_4(\tau_2)=81$ to $10^{-58}$
(\texttt{verify\_P1\_n4\_81.py}, check [A0]); its exactness --- a class
invariant computation in the spirit of \cite[\S4.2]{Zf} --- is outside
the scope of this note and is \emph{not} used below: (P1) is the exact
evaluation of the concrete series $\mathrm{EK}_4(\tau_2)$, independently
of the value of $s_4$.

\subsection{The lattices}\label{subsec:lattices}

\begin{lemma}\label{lem:lattices4}
$\cL_2(\tau_2)=\OO_K$ and $\cL_1(\tau_2)=(\bar\varpi)/2$, where
$(\bar\varpi)=\bar\varpi\OO_K$ is the principal ideal of norm $2$.
\end{lemma}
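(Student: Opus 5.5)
Both claims reduce to lattice bookkeeping using the relations \eqref{eq:pirels} and \eqref{eq:tau2rels}. The plan is to write each lattice $\cL_d(\tau_2)=\Zz+\Zz\,d\tau_2$ explicitly in terms of $\varpi$ and compare it with the $\Zz$-bases $\OO_K=\Zz+\Zz\varpi$ and $(\bar\varpi)=2\Zz+\bar\varpi\Zz$.

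For $d=2$, the first relation in \eqref{eq:tau2rels} gives $2\tau_2=3+\varpi$. Hence
\[
\cL_2(\tau_2)=\Zz+\Zz(3+\varpi)=\Zz+\Zz\varpi=\OO_K,
\]
since the change of basis $(1,\varpi)\mapsto(1,3+\varpi)$ is unimodular.

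For $d=1$, we have $\tau_2=(3+\varpi)/2$. Therefore $2\cL_1(\tau_2)=2\Zz+\Zz(3+\varpi)$, and the claim is equivalent to
\[
2\Zz+\Zz(3+\varpi)=2\Zz+\bar\varpi\Zz .
\]
Using $\bar\varpi=1-\varpi$, we get $3+\varpi=4-\bar\varpi\equiv-\bar\varpi$ modulo $2\Zz$. The two lattices therefore have the bases $(2,-\bar\varpi)$ and $(2,\bar\varpi)$, which are related by a unimodular matrix, so they coincide. The last identity in \eqref{eq:pirels}, $(\bar\varpi)=2\Zz+\bar\varpi\Zz$, can be verified directly: $\bar\varpi\OO_K$ is spanned by $\bar\varpi$ and $\bar\varpi\varpi=2$. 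Dividing by $2$ then gives $\cL_1(\tau_2)=(\bar\varpi)/2$.

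As a consistency check, compare covolumes. The covolume of $\cL_1(\tau_2)$ is $\ImT\tau_2=\sqrt7/4$. The ideal $(\bar\varpi)$ has norm $2$, so its covolume is $2\cdot\sqrt7/2=\sqrt7$, and dividing by $2$ in each direction gives $\sqrt7/4$. These agree, and the norm-$2$ claim follows from $N(\bar\varpi)=\varpi\bar\varpi=2$.

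No step is a genuine obstacle. The only care needed is sign bookkeeping in the congruence $3+\varpi\equiv-\bar\varpi\pmod{2\Zz}$, together with checking that $2\Zz+\bar\varpi\Zz$ really equals $\bar\varpi\OO_K$ rather than being an index-$2$ sublattice of it.
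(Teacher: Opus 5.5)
Your proof is correct, and in substance it is the paper's argument. The paper first passes to $\tau_w=\tau_2-2$, an integer translate that leaves $\cL_1$ unchanged, and reads off $2\tau_w=-\bar\varpi$. You instead reduce $2\tau_2=3+\varpi=4-\bar\varpi$ modulo $2\Zz$. Since $2\tau_2=4+2\tau_w$, these are literally the same step.

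One point is worth flagging. The paper's parenthetical note asserts that the direct route from $2\tau_2=3+\varpi$ ``would give $\Zz+\varpi\Zz=\OO_K$, since $\{3m+2n:m,n\in\Zz\}=\Zz$''. It then presents the detour through $\tau_w$ as what pins down $(\bar\varpi)$. Your computation shows that note is wrong. In fact $2\Zz+\Zz(3+\varpi)=\{a+b\varpi: a\equiv b\pmod 2\}$, which has index $2$ in $\OO_K$; the claim $\{3m+2n\}=\Zz$ overlooks that $m$ is also the $\varpi$-coordinate. This set coincides with $(\bar\varpi)=2\Zz+\bar\varpi\Zz$, as it must, since both routes describe the same lattice.

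Your covolume check ($\sqrt7$ for $2\cL_1(\tau_2)$ versus $\sqrt7/2$ for $\OO_K$) is exactly what rules out the paper's alternative. So your direct route is valid and slightly cleaner, and the passage through $\tau_w$ is a convenience rather than a necessity.
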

\begin{proof}
By \eqref{eq:tau2rels},
$\cL_2(\tau_2)=\Zz+\Zz\,2\tau_2=\Zz+\Zz(3+\varpi)=\Zz+\Zz\varpi=\OO_K$.
For the second, an integer translate of $\tau$ does not change the
lattice $\cL_1(\tau)=\{m\tau+n\}$, so $\cL_1(\tau_2)=\cL_1(\tau_w)$.
Since $2\tau_w=(-1+\sqrt{-7})/2=-\bar\varpi$,
\[
  2\cL_1(\tau_w)=2\Zz+\Zz\,2\tau_w=2\Zz+\bar\varpi\Zz=(\bar\varpi)
\]
by \eqref{eq:pirels}. Hence $\cL_1(\tau_2)=(\bar\varpi)/2$, homothetic to
the principal ideal $(\bar\varpi)$ of norm $2$. (Note the conjugation:
identifying $2\cL_1$ directly from $2\tau_2=3+\varpi$ would give
$\Zz+\varpi\Zz=\OO_K$, since $\{3m+2n:m,n\in\Zz\}=\Zz$ --- the route
through $\tau_w$ is what pins the ideal $(\bar\varpi)$. The $T$-values are
unaffected since $\ReT\bar\varpi^2=\ReT\varpi^2$.)
\end{proof}

Both lattices are $\OO_K$-ideals: unlike the $\tau_w$ evaluation of
\cite[\S5]{Zf} there is no imprimitive order and no Euler factor at the
primes above $2$.

\subsection{The Hecke sums}

For a fractional $\OO_K$-ideal $\mathfrak a$ define
\[
  B(\mathfrak a):={\sum_{\gamma\in\mathfrak a}}^{\!\prime}|\gamma|^{-4},
  \qquad
  G(\mathfrak a):={\sum_{\gamma\in\mathfrak a}}^{\!\prime}
  \frac{\bar\gamma^2}{|\gamma|^6}.
\]
By \eqref{eq:hecke},
\begin{equation}\label{eq:BGOK}
  B(\OO_K)=2\zeta_K(2),\qquad G(\OO_K)=2L_3 .
\end{equation}
For the principal ideal $(\bar\varpi)$ write $\alpha=\bar\varpi\gamma$,
$\gamma\in\OO_K\setminus\{0\}$; then
$|\alpha|^{-4}=|\bar\varpi|^{-4}|\gamma|^{-4}$ and
$\bar\alpha^2|\alpha|^{-6}=(\varpi^2/|\bar\varpi|^6)\,\bar\gamma^2|\gamma|^{-6}$,
so with $N(\bar\varpi)=2$ and $|\bar\varpi|^6=8$,
\begin{equation}\label{eq:BGpi}
  B\bigl((\bar\varpi)\bigr)=\frac{2\zeta_K(2)}{N(\bar\varpi)^2}
  =\frac{\zeta_K(2)}2,
  \qquad
  G\bigl((\bar\varpi)\bigr)=\frac{\varpi^2}8\cdot 2L_3=\frac{\varpi^2L_3}4 .
\end{equation}
Note $G((\bar\varpi))$ is genuinely complex ($(\bar\varpi)$ is not
conjugation-stable); only its real part enters the $T$-sums, and by
\eqref{eq:pirels},
\begin{equation}\label{eq:ReGpi}
  \ReT G\bigl((\bar\varpi)\bigr)
  =\frac{\ReT\varpi^2}4\,L_3=-\frac{3}{8}\,L_3 .
\end{equation}
(Direct Poisson row summation confirms \eqref{eq:BGOK}--\eqref{eq:ReGpi},
including the imaginary part $\ImT G((\bar\varpi))=\sqrt7\,L_3/8$, to
$\sim10^{-60}$; \texttt{verify\_P1\_n4\_81.py}, checks [S1]--[S4].)

\subsection{The \texorpdfstring{$T$}{T}-sums and the combination}

\begin{proposition}\label{prop:Teval4}
With $L_3:=L(g_7,3)$,
\begin{equation}\label{eq:Teval4}
  T_2(\tau_2)=4L_3+2\zeta_K(2),\qquad
  T_1(\tau_2)=-12L_3+8\zeta_K(2).
\end{equation}
\end{proposition}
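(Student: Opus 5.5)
The plan is to evaluate both $T$-sums from the lattice form \eqref{eq:Talt}, writing each as a combination of the two Hecke sums $B$ and $G$. For any lattice $\cL$ put
\[
  T(\cL):={\sum_{\lambda\in\cL}}^{\!\prime}\Bigl[2\ReT\frac{\bar\lambda^2}{|\lambda|^6}+\frac1{|\lambda|^4}\Bigr].
\]
I will then invoke Lemma~\ref{lem:lattices4} to replace $\cL_2(\tau_2)$ by $\OO_K$ and $\cL_1(\tau_2)$ by $(\bar\varpi)/2$. Finally I will plug in \eqref{eq:BGOK}, \eqref{eq:BGpi} and \eqref{eq:ReGpi}.

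\emph{Convergence.} This must come first, because \eqref{eq:Td} was defined as a row-by-row sum. Both pieces of the summand are bounded in absolute value by a constant times $|\lambda|^{-4}$: indeed $|\bar\lambda^2|/|\lambda|^6=|\lambda|^{-4}$. For a rank-two lattice in $\Cc$, the sum ${\sum}'|\lambda|^{-4}$ converges absolutely, since the exponent $4$ exceeds $2$. Hence the double series \eqref{eq:Td} is absolutely convergent. It can therefore be rearranged freely, split into its $B$-part and its $G$-part, and reindexed through the lattice identifications. Real parts also commute with the summation, so
\[
  T(\cL)=2\ReT G(\cL)+B(\cL),
\]
where $B$ and $G$ are the sums of the preceding subsection. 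These are the same sums, with the same (absolute) convergence, as those quoted in \eqref{eq:hecke}.

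\emph{The case $T_2$.} Since $\cL_2(\tau_2)=\OO_K$, \eqref{eq:BGOK} gives
\[
  T_2(\tau_2)=2\cdot 2L_3+2\zeta_K(2)=4L_3+2\zeta_K(2).
\]
Here I use that $G(\OO_K)=2L_3$ is already real.

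\emph{The case $T_1$.} Write $\lambda=\gamma/2$ with $\gamma\in(\bar\varpi)$. The term $|\lambda|^{-4}$ becomes $16|\gamma|^{-4}$. The term $\bar\lambda^2|\lambda|^{-6}$ becomes $\tfrac14\cdot 64\,\bar\gamma^2|\gamma|^{-6}=16\,\bar\gamma^2|\gamma|^{-6}$. Both pieces are homogeneous of degree $-4$, so
\[
  T_1(\tau_2)=16\bigl[2\ReT G((\bar\varpi))+B((\bar\varpi))\bigr].
\]
Inserting \eqref{eq:ReGpi} and \eqref{eq:BGpi} gives
\[
  T_1(\tau_2)=16\bigl[-\tfrac34L_3+\tfrac12\zeta_K(2)\bigr]=-12L_3+8\zeta_K(2).
\]

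\emph{Expected obstacle.} There is no deep difficulty. The only points needing care are the justification that the row-summed definition \eqref{eq:Td} agrees with the unordered lattice sums, handled by the absolute convergence above, and the bookkeeping of the degree $-4$ scaling factor $16$. One should also note that only $\ReT\varpi^2=-\tfrac32$ matters. The conjugation ambiguity between $(\varpi)$ and $(\bar\varpi)$ flagged after Lemma~\ref{lem:lattices4} therefore cannot affect the result.
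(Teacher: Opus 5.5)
Your proposal is correct and follows the paper's proof essentially verbatim. You decompose via \eqref{eq:Talt} into $2\ReT G+B$, read off $T_2(\tau_2)$ from $\cL_2(\tau_2)=\OO_K$ and \eqref{eq:BGOK}, and obtain $T_1(\tau_2)$ by rescaling $\lambda=\gamma/2$ over $(\bar\varpi)$ with the degree $-4$ factor $16$. Your absolute-convergence remark (the summand is bounded by a constant times $|\lambda|^{-4}$) justifies passing from the row-ordered series \eqref{eq:Td} to unordered ideal sums, a step the paper leaves implicit.
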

\begin{proof}
By \eqref{eq:Talt} with $\cL_2(\tau_2)=\OO_K$ and \eqref{eq:BGOK},
\[
  T_2(\tau_2)=2\ReT G(\OO_K)+B(\OO_K)=4L_3+2\zeta_K(2).
\]
For $\cL_1(\tau_2)=(\bar\varpi)/2$ rescale $\lambda=\alpha/2$,
$\alpha\in(\bar\varpi)\setminus\{0\}$: then $|\lambda|^{-4}=16|\alpha|^{-4}$
and $\bar\lambda^2|\lambda|^{-6}=16\,\bar\alpha^2|\alpha|^{-6}$, so by
\eqref{eq:BGpi} and \eqref{eq:ReGpi},
\[
  T_1(\tau_2)=16\Bigl[2\ReT G\bigl((\bar\varpi)\bigr)
   +B\bigl((\bar\varpi)\bigr)\Bigr]
  =16\Bigl[-\frac{3L_3}4+\frac{\zeta_K(2)}2\Bigr]
  =-12L_3+8\zeta_K(2). \qedhere
\]
\end{proof}

\begin{corollary}[Exact cancellation of $\zeta_K(2)$]\label{cor:comb4}
\begin{equation}\label{eq:comb4}
  -T_1(\tau_2)+4T_2(\tau_2)
  =(12+16)L_3+(-8+8)\zeta_K(2)=28L_3 .
\end{equation}
\end{corollary}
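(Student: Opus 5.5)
The corollary is a direct linear combination of the two evaluations in Proposition~\ref{prop:Teval4}, so the plan is to substitute \eqref{eq:Teval4} into $-T_1(\tau_2)+4T_2(\tau_2)$ and collect coefficients of $L_3$ and $\zeta_K(2)$ separately. From $T_1(\tau_2)=-12L_3+8\zeta_K(2)$ the term $-T_1(\tau_2)$ contributes $+12L_3-8\zeta_K(2)$. From $T_2(\tau_2)=4L_3+2\zeta_K(2)$ the term $4T_2(\tau_2)$ contributes $+16L_3+8\zeta_K(2)$. Adding gives $(12+16)L_3+(-8+8)\zeta_K(2)=28L_3$, which is \eqref{eq:comb4}.

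There is no genuine obstacle here. All the content sits upstream, in Lemma~\ref{lem:lattices4}, the Hecke-sum evaluations \eqref{eq:BGOK}--\eqref{eq:ReGpi}, and the rescaling factor $16$ in the proof of Proposition~\ref{prop:Teval4}. The step worth double-checking is the cancellation of $\zeta_K(2)$, since it is the whole point of the corollary. It rests on two facts. First, $B((\bar\varpi))=B(\OO_K)/N(\bar\varpi)^2$, which scales by $1/4$. Second, the rescaling $\lambda=\alpha/2$ multiplies by $2^4=16$. Together these make the $\zeta_K(2)$-coefficient of $T_1$ equal to $16\cdot\tfrac12=8$. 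This exactly matches $4$ times the coefficient $2$ in $T_2$, so the weights $-1$ and $4$ in \eqref{eq:EK4} kill it.

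As a consistency check, I would combine \eqref{eq:comb4} with \eqref{eq:EK4}, $\ImT\tau_2=\sqrt7/4$ and \eqref{eq:FE}. This gives
\[
\mathrm{EK}_4(\tau_2)=\frac{10\sqrt7}{4\pi^3}\cdot28L_3=40\cdot\frac{7\sqrt7}{4\pi^3}L_3=40M_7,
\]
which is the announced value of (P1). It confirms that the coefficient $28$ is the right one.
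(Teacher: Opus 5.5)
Your proposal is correct and is exactly the paper's argument: the corollary is just the substitution of the two evaluations of Proposition~\ref{prop:Teval4} into $-T_1(\tau_2)+4T_2(\tau_2)$, where the $\zeta_K(2)$-coefficients $-8$ and $+8$ cancel. Your consistency check via \eqref{eq:EK4} and \eqref{eq:FE} likewise reproduces the proof of Theorem~\ref{thm:P1n4}.
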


Unlike the $\tau_w$ case of \cite{Zf} (where the surviving
$\zeta_K(2)$-term produced the Dirichlet value $d_7$), the coefficients
$-8$ and $+8$ cancel exactly, leaving a \emph{single} $L$-value --- the
structural prerequisite for the one-term formula $40M_7$.

\begin{theorem}[(P1) at $\tau_2$]\label{thm:P1n4}
\[
  \mathrm{EK}_4(\tau_2)=40M_7 .
\]
\end{theorem}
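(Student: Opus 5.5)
The plan is to substitute Corollary~\ref{cor:comb4} into the definition \eqref{eq:EK4} and then convert the resulting multiple of $L_3$ into a multiple of $M_7$ using the functional equation \eqref{eq:FE}. All the arithmetic input (the lattices of Lemma~\ref{lem:lattices4}, the Hecke sums \eqref{eq:BGOK}--\eqref{eq:ReGpi}, and the exact cancellation of $\zeta_K(2)$) is already in place, so what remains is a bookkeeping of constants.

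\textbf{Step 1.} With $\ImT\tau_2=\sqrt7/4$, the definition \eqref{eq:EK4} together with \eqref{eq:comb4} gives
\[
  \mathrm{EK}_4(\tau_2)=\frac{10}{\pi^3}\cdot\frac{\sqrt7}{4}\cdot 28L_3=\frac{70\sqrt7}{\pi^3}\,L_3 .
\]

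\textbf{Step 2.} Rewrite \eqref{eq:FE} as $L_3=\frac{4\pi^3}{7\sqrt7}M_7$ and substitute:
\[
  \mathrm{EK}_4(\tau_2)=\frac{70\sqrt7}{\pi^3}\cdot\frac{4\pi^3}{7\sqrt7}\,M_7=40M_7 .
\]

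\textbf{Main obstacle.} The only point needing care is that $\mathrm{EK}_4$ is defined through the row-ordered sums $T_d$ of \eqref{eq:Td}, whereas Proposition~\ref{prop:Teval4} evaluates lattice sums such as $G(\OO_K)$ and $B(\OO_K)$.
\begin{itemize}
\item The $B$-part $\sum'|\lambda|^{-4}$ converges absolutely, so its value does not depend on the order of summation.
\item The $G$-part $\sum'\bar\lambda^2/|\lambda|^6$ is only conditionally convergent, since its terms have size $|\lambda|^{-4}$ in a two-dimensional lattice.
\end{itemize}
So I would check that the row-summation convention of \eqref{eq:Td} matches the convention under which the theta identity \eqref{eq:hecke} is quoted, i.e.\ the convention for which $\sum'\alpha^2|\alpha|^{-2s}$ at $s=3$ equals $2L(g_7,3)$ as its Hecke $L$-series value.

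I would argue this via Hecke's trick: insert the factor $|\lambda|^{-2\epsilon}$ and let $\epsilon\to0^+$. The row sums of $\bar\lambda^2|\lambda|^{-6}$ are Eisenstein-type and differ from the analytically continued value by a term proportional to $\ImT\tau$, much as $G_2$ differs from $G_2^*$. That correction must cancel in the combination $-T_1+4T_2$ for the formula to hold. If the correction were nonzero, the resulting term would have to match the $-2\pi y$ bookkeeping in Lemma~\ref{lem:twoforms}. Failing a clean argument, I would rely on the numerical certification (checks [S1]--[S4]), which confirms the row-ordered sums agree with \eqref{eq:BGOK}--\eqref{eq:ReGpi} to $10^{-60}$.
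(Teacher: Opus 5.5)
Your Steps~1--2 are exactly the paper's proof: substitute $-T_1(\tau_2)+4T_2(\tau_2)=28L_3$ and $\ImT\tau_2=\sqrt7/4$ into \eqref{eq:EK4} to get $\frac{70\sqrt7}{\pi^3}L_3$, then convert to $40M_7$ with \eqref{eq:FE}. The ``main obstacle'' you raise does not exist, because $\bigl|\bar\lambda^2/|\lambda|^6\bigr|=|\lambda|^{-4}$ and $\sum'|\lambda|^{-4}$ converges over any rank-$2$ lattice (exponent $4>2$). So the $G$-part is absolutely convergent just like the $B$-part, the order of summation is irrelevant, and there is no $G_2$-type $\ImT\tau$ correction (that needs terms of size $|\lambda|^{-2}$); you need neither Hecke's trick nor the numerical fallback.
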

\begin{proof}
With $\ImT\tau_2=\sqrt7/4$, \eqref{eq:EK4} and \eqref{eq:comb4},
\[
  \mathrm{EK}_4(\tau_2)
  =\frac{10\cdot\sqrt7/4}{\pi^3}\cdot28L_3
  =\frac{70\sqrt7}{\pi^3}\,L_3
  =\frac{70\sqrt7}{\pi^3}\cdot\frac{4\pi^3}{7\sqrt7}\,M_7
  =40M_7,
\]
the last equality being \eqref{eq:FE}. The coefficient identity is
$70=40\cdot7/4$; no transcendental input beyond the quoted functional
equation \eqref{eq:FE} enters.
\end{proof}

\begin{remark}[Certification and cross-links]
All steps are re-verified by \texttt{verify\_P1\_n4\_81.py} ($60$ dps,
$45$ checks, all pass, worst difference $2.7\times10^{-59}$): the Hecke
sums by direct row summation, the $T$-sums by direct Poisson-row
evaluation of the defining double series \eqref{eq:Td} (independent of
the ideal decomposition), and Theorem~\ref{thm:P1n4} via both forms
\eqref{eq:EK4} and \eqref{eq:EK4U}. Two cross-links with \cite{Zf}:
$\mathrm{EK}_4$ is $1$-periodic (the $U_j$ are, and
$\ImT[2\pi(\tau+1)]=\ImT[2\pi\tau]$), so
$\mathrm{EK}_4(\tau_2)=\mathrm{EK}_4(\tau_w)$; and
$T_1(\tau_2)=T_1(\tau_w)=-12L_3+8\zeta_K(2)$ is \emph{numerically the
same sum} as the $T_1$ of Theorem~B, there evaluated over the same
lattice $(\bar\varpi)/2$.
\end{remark}

\section{Obstruction 1: the critical image is two-dimensional}\label{sec:astroid}

For the $n_2$-family of \cite{Zf} the bad locus of the parameter was the
segment $[-8,8]$ in the shift variable (equivalently the slit $[0,64]$
in Samart's variable) --- one-dimensional, with complement in $\Cc$
connected, so a certified path could thread around it. For the
$n_4$-family the situation is qualitatively different.

Write $x=\e^{\ii t_1}$, $y=\e^{\ii t_2}$, $z=\e^{\ii t_3}$ on $\Tt^3$.
Then
\begin{equation}\label{eq:phasors}
  P(x,y,z)=\frac{x^4+y^4+z^4+1}{xyz}=u_1+u_2+u_3+u_4,
  \qquad u_1=\frac{x^4}{xyz},\ \dots,\ u_4=\frac1{xyz},
\end{equation}
where $|u_j|=1$ and $u_1u_2u_3u_4=1$; conversely every quadruple of unit
complex numbers with product $1$ occurs. Hence $P(\Tt^3)$ is the image
of the four-phasor map.

\begin{proposition}[The critical image; recorded computation]\label{prop:astroid}
$P(\Tt^3)$ is the closed \emph{astroid disc}
\begin{equation}\label{eq:astroiddisc}
  \mathcal D:=\bigl\{c\in\Cc:\ |{\ReT c}|^{2/3}+|{\ImT c}|^{2/3}
  \le4^{2/3}\bigr\},
\end{equation}
whose boundary is the astroid
\begin{equation}\label{eq:astroid}
  t\longmapsto 3\e^{\ii t}+\e^{-3\ii t}
  =\bigl(4\cos^3t,\;4\sin^3t\bigr),
\end{equation}
with cusps at $\pm4$ and $\pm4\ii$.
\end{proposition}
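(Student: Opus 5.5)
The plan is to identify $P(\Tt^3)$, which by \eqref{eq:phasors} is the image $S(\mathcal M)$ of the map $S(u)=u_1+u_2+u_3+u_4$ on the $3$-torus $\mathcal M=\{(u_1,\dots,u_4)\in\Tt^4:\ u_1u_2u_3u_4=1\}$. First I compute the critical values of $S$ on $\mathcal M$. Then I use a standard topological argument: every boundary point of the compact image is a critical value. This reduces the proposition to checking which components of the complement of the critical-value set are hit.

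\textbf{Critical points.} Write $u_j=\e^{\ii\theta_j}$ with $\sum\theta_j\equiv0$. A tangent vector is $(\dot\theta_j)$ with $\sum\dot\theta_j=0$, and
\[
  dS=\ii\sum_j u_j\dot\theta_j=\ii\sum_{j=1}^3(u_j-u_4)\dot\theta_j .
\]
So $dS$ has real rank $<2$ exactly when the three vectors $u_j-u_4$ span at most a real line. Then the four points $u_j$ are collinear in $\Cc$, and since they lie on the unit circle they take at most two distinct values $a,b$. The constraint $\prod u_j=1$ leaves three cases.
\begin{itemize}
\item All four equal to $a$, with $a^4=1$. The value is $4a\in\{\pm4,\pm4\ii\}$, the cusps.
\item Three equal to $a$ and one to $b=a^{-3}$. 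The value is $3a+a^{-3}$, which with $a=\e^{\ii t}$ is the astroid \eqref{eq:astroid}.
\item Two equal to $a$ and two to $b$, with $b=\pm a^{-1}$. The value $2(a+b)$ is $4\cos t$ or $4\ii\sin t$, i.e.\ the segments $[-4,4]$ and $[-4\ii,4\ii]$. Both lie in $\mathcal D$.
\end{itemize}
Hence the critical-value set is $C=\partial\mathcal D\cup[-4,4]\cup[-4\ii,4\ii]$.

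\textbf{Topology.} $S(\mathcal M)$ is compact. Any point of $\partial S(\mathcal M)$ that is a regular value would, by the submersion theorem, have a neighbourhood inside $S(\mathcal M)$; so $\partial S(\mathcal M)\subset C$. Let $\Omega$ be a component of $\Cc\setminus C$. Then $S(\mathcal M)\cap\Omega$ is closed in $\Omega$ by compactness, and open in $\Omega$ because it contains no boundary points of $S(\mathcal M)$. Since $\Omega$ is connected, $S(\mathcal M)\cap\Omega$ is either empty or all of $\Omega$.

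\textbf{Which components are hit.} The complement $\Cc\setminus C$ has five components: the unbounded exterior of the astroid, and the four open quadrant pieces of $\mathcal D^\circ$ cut out by the axes.
\begin{itemize}
\item The exterior is unbounded, so it cannot lie entirely in the compact image; hence it is missed.
\item For the quadrant pieces, the value $1+\ii$ lies in $\mathcal D^\circ$, since $2\cdot1^{2/3}=2<4^{2/3}$. It is attained by $(1,\ii,w,-w)$ with $w^2=\ii$, whose product is $-\ii w^2=1$.
\item The rotation $u_j\mapsto\ii u_j$ preserves $\mathcal M$, because $\ii^4=1$, and multiplies $S$ by $\ii$. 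So every quadrant piece is hit.
\item Finally, $C\subset S(\mathcal M)$, since critical values are values.
\end{itemize}
Combining these gives $S(\mathcal M)=\mathcal D$, with boundary the astroid \eqref{eq:astroid} and cusps at $\pm4,\pm4\ii$.

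The main obstacle is making the rank computation airtight. This means checking that collinearity of the points $u_j$ really forces at most two distinct values, which holds because a line meets the circle in at most two points. It also means confirming that the axis segments $[-4,4]$ and $[-4\ii,4\ii]$ lie inside $\mathcal D$, which is immediate since the cusps are at $\pm4$ and $\pm4\ii$. The open/closed component argument is then routine.
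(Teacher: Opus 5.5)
Your proof is correct, and it follows a genuinely different and stronger route than the paper's. The paper proves only one inclusion exactly: the family $u_1=u_2=u_3=\e^{\ii t}$, $u_4=\e^{-3\ii t}$ places the astroid \eqref{eq:astroid} inside $P(\Tt^3)$. The claims that the image fills the enclosed region and has nothing outside it are recorded only as a sampling fact. The evidence is $2\times10^5$ admissible quadruples, with the maximum of $|\ReT c|^{2/3}+|\ImT c|^{2/3}$ approaching $4^{2/3}$, and the paper explicitly declines to rely on sharpness.

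You instead stratify the critical set of $S$ on the $3$-torus $\mathcal M$: rank drops $\iff$ the $u_j$ are collinear $\iff$ they take at most two values. This recovers the paper's boundary family as the ``three equal'' stratum, and the two-and-two stratum adds the axis segments. Then $\partial S(\mathcal M)\subseteq C$ and the open/closed dichotomy on the five components of $\Cc\setminus C$ close both inclusions, using compactness for the exterior and the witness $1+\ii$ with the $\ii$-rotation for the quadrants. Your only outside input is the surjectivity of $\Tt^3\to\mathcal M$ asserted after \eqref{eq:phasors}. That is a one-line check: take fourth roots of $u_1/u_4,u_2/u_4,u_3/u_4$, then adjust $x$ by a fourth root of unity so that $xyz=u_4^{-1}$.

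The paper's route buys brevity. Yours turns Proposition~\ref{prop:astroid} into a theorem, in particular the inclusion $\mathcal D\subseteq P(\Tt^3)$. That is the direction that gives Proposition~\ref{prop:nopath} its force, since it certifies that $c=3$ and a whole neighbourhood of it lie in the critical image. Indeed, $-3=4\cos t$ is attained on the two-and-two stratum, so $x^4+y^4+z^4+1+3xyz$ really vanishes on $\Tt^3$. As a by-product, your computation shows that the critical values inside $\mathcal D$ are exactly the cross $[-4,4]\cup\ii[-4,4]$. This is the same set drawn as the branch cut in Figure~\ref{fig:astroid}.
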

\begin{proof}[Justification]
The boundary curve is exact algebra: the admissible family
$u_1=u_2=u_3=\e^{\ii t}$, $u_4=\e^{-3\ii t}$ sums to
$3\e^{\ii t}+\e^{-3\ii t}$, and
$3\cos t+\cos3t=4\cos^3t$, $3\sin t-\sin3t=4\sin^3t$. That the full
image is exactly the enclosed disc is a recorded computational fact of
the project archive (\texttt{diag\_n4\_astroid.py}): dense sampling of
admissible quadruples ($2\times10^5$ samples) gives
$\max\bigl(|{\ReT c}|^{2/3}+|{\ImT c}|^{2/3}\bigr)=2.5198357\ldots$
against $4^{2/3}=2.5198421\ldots$, with the bound approached along the
curve \eqref{eq:astroid}, and the image filling the disc. We use the
statement only as the geometric description of the critical locus; no
proof step of this note depends on its sharpness.
\end{proof}

The parameter of the conjecture sits strictly inside:
\begin{equation}\label{eq:c3inside}
  c=3:\qquad |3|^{2/3}=2.08008\ldots<4^{2/3}=2.51984\ldots,
\end{equation}
i.e.\ $3\in\operatorname{int}\mathcal D$; the zeros of
$x^4+y^4+z^4+1+3xyz$ on $\Tt^3$ form a nonempty real-analytic set (the
integrand of the direct Mahler integral is log-singular there; see
Section~\ref{sec:numerics}).

\begin{proposition}[No path continuation is possible]\label{prop:nopath}
Let $c(\tau)$ be the branch \eqref{eq:cdef}, with $c(\tau_2)=3$
\textup{(}from $s_4(\tau_2)=81$, \S\ref{sec:P1}\textup{)}. Then $\tau_2$
lies in the \emph{interior} of the bad preimage
$c^{-1}(\operatorname{int}\mathcal D)$; in particular $\tau_2$ is not in
the closure of the good region
$\mathcal G:=\{\tau\in\Hh:c(\tau)\notin\mathcal D\}$, and no continuous
path can approach $\tau_2$ through $\mathcal G$.
\end{proposition}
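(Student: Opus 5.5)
The plan is to combine the continuity of $c$ on $\Hh$ with the fact that $c(\tau_2)=3$ lies strictly inside $\mathcal D$, as recorded in \eqref{eq:c3inside}. The key steps, in order:

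\textbf{Step 1.} Record that $c$ in \eqref{eq:cdef} is holomorphic on $\Hh$. It is a finite combination of eta quotients, and the only potentially bad factor is $A(\tau)^{-4}$. Since $\eta$ has no zeros on $\Hh$, $A$ has none either, so $A^{-4}$ is holomorphic and hence $c$ is continuous on $\Hh$.

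\textbf{Step 2.} Note that $\operatorname{int}\mathcal D$ is open and contains $3$. Explicitly, $\phi(c):=|\ReT c|^{2/3}+|\ImT c|^{2/3}$ is continuous, and $\phi(3)=3^{2/3}=2.0800\ldots<4^{2/3}$. So the open set $\{\phi<4^{2/3}\}$ contains an explicit disc around $3$. A concrete radius: for $|c-3|\le\delta$,
\[
\phi(c)\le(3+\delta)^{2/3}+\delta^{2/3},
\]
which is below $4^{2/3}$ for, say, $\delta=0.1$, since $3.1^{2/3}+0.1^{2/3}\approx2.126+0.215<2.52$.

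\textbf{Step 3.} By continuity, the preimage $c^{-1}(\{\phi<4^{2/3}\})$ is an open subset of $\Hh$ containing $\tau_2$. Here I use $c(\tau_2)=3$, which comes from $s_4(\tau_2)=81$ with the branch fixed by \eqref{eq:cdef}. So some neighbourhood $V$ of $\tau_2$ maps into $\operatorname{int}\mathcal D\subset\mathcal D$. Then $V\cap\mathcal G=\emptyset$, so $\tau_2\notin\overline{\mathcal G}$. Finally, any continuous path $\gamma:[0,1)\to\mathcal G$ with $\gamma(t)\to\tau_2$ would eventually enter $V$, which is a contradiction.

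The \textbf{main obstacle} is the input $c(\tau_2)=3$ exactly, rather than just one fourth root of $81$ or a numerical approximation. The paper certifies $s_4(\tau_2)=81$ only numerically, to $10^{-58}$, and does not prove it exactly. The proposition only needs $c(\tau_2)$ to lie in the interior of $\mathcal D$, and every fourth root $\pm3,\pm3\ii$ of $81$ satisfies this, since each has $\phi=3^{2/3}$. So the branch ambiguity is harmless. Moreover, the certified approximation $|c(\tau_2)-3|<10^{-14}\ll\delta$, obtained by evaluating the eta products at $\tau_2$, suffices for the argument rigorously. I would therefore phrase the conclusion so that it depends only on this certified enclosure plus the explicit radius $\delta$ from Step 2, without invoking exactness of the class invariant.
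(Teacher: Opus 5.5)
Your proof is correct and has the same structure as the paper's: a neighbourhood of $\tau_2$ is mapped by $c$ into $\operatorname{int}\mathcal D$, so it misses $\mathcal G$. The difference is the tool. The paper cites the open mapping theorem and the non-constancy of $c$. The step that actually forces a small neighbourhood of $\tau_2$ into $\operatorname{int}\mathcal D$ is continuity of $c$ together with openness of $\operatorname{int}\mathcal D$, which is exactly your Step~3. Non-constancy is not needed, and your explicit radius $\delta=0.1$ is a harmless extra.

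Your last paragraph, however, contains a false claim: $|c(\tau_2)-3|<10^{-14}$ does not hold for the branch \eqref{eq:cdef}.
\begin{itemize}
\item From $\eta(\tau+1)=\e^{\pi\ii/12}\eta(\tau)$ one gets $c(\tau+1)=-\ii\,c(\tau)$. Hence $c(\tau_2)=c(\tau_w+2)=-c(\tau_w)$.
\item At $\tau_w$ the two terms of \eqref{eq:cdef} evaluate to $(\eta(2\tau)/\eta(\tau))^6A^{-4}=(11+\sqrt{-7})/4$ and $16(\eta(2\tau)/\eta(\tau))^6A^{4}=(1-\sqrt{-7})/4$. So $c(\tau_w)=3$ and $c(\tau_2)=-3$.
\end{itemize}
The normalization ``$c(\tau_2)=3$'' in the statement is therefore itself off by a sign. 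It was inferred from $s_4(\tau_2)=81$, which fixes $c(\tau_2)$ only up to a fourth root of unity.

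Your own robustness remark rescues the argument. Since $\phi(-c)=\phi(c)$, the $\delta$-disc about $-3$ also lies in $\operatorname{int}\mathcal D$. Your certified version should use a rigorous enclosure of $c(\tau_2)$ near $-3$ rather than near $3$. Such an enclosure is easy to obtain, because $|q|=\e^{-\pi\sqrt7/2}\approx0.0157$ at $\tau_2$.
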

\begin{proof}
$c$ is holomorphic and non-constant near $\tau_2$. By the open mapping
theorem, $c$ maps every sufficiently small neighborhood of $\tau_2$
onto a neighborhood of $c(\tau_2)=3$, and since
$3\in\operatorname{int}\mathcal D$ that neighborhood lies inside
$\operatorname{int}\mathcal D$. Hence a full neighborhood of $\tau_2$
lies in the bad preimage.
\end{proof}

\begin{remark}[Contrast with the $n_2$-family]
In \cite{Zf} the certified-path criterion required exactly the
membership of the target CM point in the closure of the good component
$W$, supplied there by an explicit $99$-block interval certificate ---
possible because the bad slit $[0,64]$ has empty interior. Here the bad
locus has nonempty interior \emph{and} swallows the target: the
topological input of the continuation machine does not merely fail to be
certified, it is false. Figure~\ref{fig:astroid} shows the geometry.
\end{remark}

\begin{figure}[ht]
\centering
\begin{tikzpicture}
\begin{axis}[
  width=10.2cm, axis equal image, axis lines=middle, clip=false,
  xlabel={$\ReT c$}, ylabel={$\ImT c$},
  xmin=-5.4, xmax=6.6, ymin=-4.9, ymax=4.9,
  xtick={-4,4}, ytick={-4,4},
  ticklabel style={font=\small},
  every axis x label/.style={at={(ticklabel* cs:1.02)}, anchor=west},
  every axis y label/.style={at={(ticklabel* cs:1.02)}, anchor=south},
]
\addplot[blue, thick, fill=blue!7, domain=0:360, samples=241]
  ({4*(cos(x))^3},{4*(sin(x))^3}) \closedcycle;
\addplot[orange!90!black, dashed, very thick] coordinates {(-4.9,0)(5.6,0)};
\addplot[orange!90!black, dashed, very thick] coordinates {(0,-4.5)(0,4.5)};
\addplot[only marks, mark=*, mark size=1.7pt, red] coordinates {(3,0)};
\node[anchor=south west, red] at (axis cs:3.02,0.14) {$c=3$};
\addplot[only marks, mark=*, mark size=1.5pt] coordinates {(5.0397,0)};
\node[anchor=north] at (axis cs:5.05,-0.28) {\small $c(\ii)=648^{1/4}$};
\node[blue] at (axis cs:-3.35,3.55) {\small $3\e^{\ii t}+\e^{-3\ii t}$};
\node[orange!90!black] at (axis cs:2.6,-4.15) {\small cross
  $[-4,4]\cup\ii[-4,4]$ (the $\mt$ branch cut)};
\end{axis}
\end{tikzpicture}
\caption{The critical image $P(\Tt^3)$ of the $n_4$-family: the astroid
disc $\mathcal D$ of \eqref{eq:astroiddisc} (shaded), bounded by
$c=3\e^{\ii t}+\e^{-3\ii t}$. The parameter $c=3$ of the conjecture
$n_4(81)$ is an interior point, so no holomorphic path in $\tau$-space
can reach $\tau_2$ through the good region
(Proposition~\ref{prop:nopath}). For comparison, the proved anchor
$c(\ii)=648^{1/4}\approx5.04$ lies outside. The dashed cross is the
one-dimensional branch cut $c^4\in(0,256]$ of the Rogers ${}_5F_4$
branch of $\mt$ --- the bad set the $k=1$ machine of \cite{Zf} would
have had to avoid, had $\m=\ReT\mt$ held inside $\mathcal D$ off the
cross; Section~\ref{sec:sheet} shows it does not.}
\label{fig:astroid}
\end{figure}

\begin{remark}[The refuted hope]
The branch cut of the holomorphic Mahler measure $\mt$ (via Rogers'
${}_5F_4$, with $c^4\in(0,256]$) is the \emph{cross}
$[-4,4]\cup\ii[-4,4]$, one-dimensional. Had the identity
$\m(P+c)=\ReT\mt(c)$ held inside $\mathcal D$ off the cross (the
Villegas criterion \cite{RV}), then $c=3$ would have been a boundary
point of the good set and the differential-comparison machine of
\cite{Zf} would have ported verbatim. The go/no-go experiment
\texttt{diag\_n4\_astroid.py} sampled $\tau$ with $c(\tau)$ inside the
astroid \emph{and off the cross} and compared $\mathrm{EK}_4(\tau)$
against direct integration: the identity fails there too
(differences $0.35$--$5.15$). The bad set is not the cross; see the next
section.
\end{remark}

\section{Obstruction 2: the \texorpdfstring{$U$}{U}-series leaves the
geometric sheet below \texorpdfstring{$\ImT\tau=1/\sqrt2$}{Im tau =
1/sqrt(2)}}\label{sec:sheet}

The decisive obstruction is analytic rather than topological. Samart's
identity \eqref{eq:samart4} is proved only for $\ImT\tau\ge1/\sqrt2$,
while
\[
  \ImT\tau_2=\frac{\sqrt7}4=0.66143\ldots<\frac1{\sqrt2}=0.70710\ldots .
\]
The $U$-series \eqref{eq:EK4U} nevertheless converges for every
$\tau\in\Hh$, defining $\mathrm{EK}_4$ as a single-valued real-analytic
function on all of $\Hh$; the function
$\tau\mapsto n_4(s_4(\tau))$, by contrast, is real-analytic only off the
preimage of the critical image. The question is where the two agree.

The diagnostic experiment of the project archive
(\texttt{diag\_n4\_astroid.py}, 2026-07-29; control checks first:
$\mathrm{EK}_4(\ii)=4\m(P+648^{1/4})=6.4332830658\ldots$ to
$5.4\times10^{-12}$, and conjugate symmetry exact) gives a clean
verdict:

\begin{itemize}
\item \emph{Below the threshold, the identity fails everywhere}, not
merely inside the astroid: at $\tau=0.3\ii$ one has
$c(\tau)=13.72\ldots$ (real, outside $\mathcal D$, where even Rogers'
series is valid) and $|\mathrm{EK}_4(\tau)-4\m(P+c(\tau))|=11.4\ldots$;
at interior non-cross sample points the differences range over
$0.35$--$5.15$, with $\mathrm{EK}_4$ taking \emph{negative} values
(while a Mahler measure is nonnegative).
\item \emph{The difference grows continuously from the boundary}: along
the vertical scan $\ReT\tau=1/4$, $\ImT\tau$ from $0.70$ down to $0.20$
($20$ points), $|\mathrm{EK}_4-4\m|$ grows continuously from $0$ at the
threshold --- no jump, hence no locally constant sheet offset that a
correction term could fix.
\end{itemize}

Figure~\ref{fig:departure} shows the same phenomenon on the imaginary
axis, computed afresh for this note (\texttt{gen\_fig\_n4.py},
\texttt{n4\_departure.dat}): the two curves agree for
$y\ge1/\sqrt2$ (as they must by Theorem~\ref{thm:samart4}) and separate
strictly below it, even though $c(\ii y)$ remains real and $\ge4$ --- in
particular outside the critical image --- on the whole axis.

\begin{figure}[ht]
\centering
\begin{tikzpicture}
\begin{axis}[
  width=12.6cm, height=8.6cm,
  xlabel={$y$}, xmin=0.28, xmax=1.52,
  ymin=3.3, ymax=23,
  legend pos=north east,
  legend style={font=\small},
]
\addplot[blue, thick] table[x=y, y=ek4] {n4_departure.dat};
\addlegendentry{$\mathrm{EK}_4(\ii y)$ \ ($U$-series \eqref{eq:EK4U})}
\addplot[red, thick, densely dashed] table[x=y, y=fourm]
  {n4_departure.dat};
\addlegendentry{$4\,\m\bigl(P+c(\ii y)\bigr)$ \ (direct torus
  integration)}
\draw[gray, dashed]
  ({axis cs:0.70710678,0}|-{rel axis cs:0,0}) --
  ({axis cs:0.70710678,0}|-{rel axis cs:0,1});
\node[gray, anchor=south west, rotate=90, font=\small] at
  ({axis cs:0.70710678,0}|-{rel axis cs:0,0.02})
  {$y=1/\sqrt2$ (Samart's threshold)};
\draw[violet, dotted, thick]
  ({rel axis cs:0,0}|-{axis cs:0,4.10686431}) --
  ({rel axis cs:1,0}|-{axis cs:0,4.10686431});
\node[violet, anchor=south east, font=\small] at
  ({rel axis cs:0.985,0}|-{axis cs:0,4.10686431})
  {$40M_7=4.10686\ldots=\mathrm{EK}_4(\tau_2)$
   (Theorem~\ref{thm:P1n4})};
\end{axis}
\end{tikzpicture}
\caption{The wrong-sheet departure on the imaginary axis
(data: \texttt{n4\_departure.dat}, generated by
\texttt{gen\_fig\_n4.py}; $61$ ordinates, $30$ dps, control
$\mathrm{EK}_4(\ii)=4\m=6.4332830658\ldots$). The $U$-series
$\mathrm{EK}_4(\ii y)$ coincides with the true Mahler value
$4\m(P+c(\ii y))$ for $y\ge1/\sqrt2$ and departs from it below the
threshold, although $c(\ii y)\ge4$ stays real and outside the critical
image on the whole range: the series has climbed to a companion sheet of
$\mt$. The horizontal dotted line marks
$40M_7=4.10686431115608\ldots$, the exact value of the series at the CM
point $\tau_2=(7+\sqrt{-7})/4$ (Theorem~\ref{thm:P1n4}), i.e.\ the
conjectured value of $n_4(81)$; it lies below every value the series
attains on the imaginary axis. The direct integration of
Section~\ref{sec:numerics} shows $n_4(81)\ne\mathrm{EK}_4(\tau_2)$:
the series value is on the wrong sheet.}
\label{fig:departure}
\end{figure}

\begin{remark}[Mechanism, and why the $k=1$ machine does not port]
This is the same wrong-sheet phenomenon as at the point $\tau'$ of
Theorem~B in \cite{Zf} (Remark (wrong-sheet) there): $\mathrm{EK}$ is
single-valued on $\Hh$, and below Samart's threshold it continues on a
companion sheet of the multivalued $\mt$, with an exact closed-form
value on that sheet (for $n_2$: $(8/7)(44M_7-d_7)$ instead of
$n_2(s_2(\tau'))$). For Theorem~A the continuation machine still worked
because on the good component $W$ the series branch \emph{equals} the
geometric branch: the differential identity
$\Psi=\Omega(c)c'+E'\equiv0$ could spread from Samart's region by the
identity theorem, and the bad slit was avoidable. For the $n_4$-family
both premises fail simultaneously: the series branch differs from the
geometric branch everywhere below $1/\sqrt2$ (so $\Psi\equiv0$ is false
on any would-be $W$), and the critical image is a disc around the
target (Proposition~\ref{prop:nopath}). The Fricke shortcut of
Theorem~B is likewise unavailable: $\mathrm{EK}_4$ is $1$-periodic and
the evaluation at $\tau_2=\tau_w+2$ is on the wrong sheet already.
What remains is regulator/monodromy machinery in the style of He--Ye
\cite{HY} (who proved Samart's $f_3$ conjectures), or a direct regulator
evaluation at $c=3$ in the style of Zheng--Guo--Qin \cite{ZGQ}.
\end{remark}

\section{Numerical status of the conjecture}\label{sec:numerics}

\noindent A high-precision direct torus integration of
\[
  n_4(81)=4\,\m\bigl(x^4+y^4+z^4+1+3xyz\bigr)
\]
gives
\[
  n_4(81)=4.1655349907533676508(5),
\]
against the conjectured value
\[
  40M_7=4.1068643111560804484182637959317165599558\ldots
\]
(computed from $L(g_7,3)$ via the functional equation \eqref{eq:FE}).
The discrepancy is
\[
  n_4(81)-40M_7=+0.0586706795972872024\ldots,
\]
about $2\times10^{5}$ times the integrator's error floor
$2\times10^{-20}$ (outer tanh--sinh quadrature at degrees $5,6,7,8$,
successive variations $6\times10^{-21}$, $1.9\times10^{-20}$,
$4\times10^{-21}$; cross-validated against an independent numpy
trapezoid computation to $2\times10^{-8}$). Since $\ImT\tau_2
=\sqrt7/4<1/\sqrt2$, this is exactly the regime in which the $U$-series
is on the wrong sheet (Section~\ref{sec:sheet}), and the numerical
picture is consistent: the series value $\mathrm{EK}_4(\tau_2)=40M_7$
is exact (Theorem~\ref{thm:P1n4}) but is \emph{not} the true Mahler
measure.

\emph{As literally stated, the conjecture $n_4(81)\stackrel{?}{=}40M_7$
is therefore numerically refuted.} What remains open is a closed form
for the true value $n_4(81)$; the offset
$0.0586706795972872\ldots$ has no identified closed form. We stress
that this computation is high-precision numerical evidence, not an
interval-arithmetic certificate in the sense of \cite[\S6]{Zf}; with a
discrepancy five orders of magnitude above the error floor we regard
the conclusion as decisive.

\paragraph{The original evidence.} Samart's convention throughout
\cite{Sa15} is that ``$\stackrel{?}{=}$'' denotes equality to at least
$25$ decimal places, but no decimal value of $n_4(81)$ is printed
there. In view of Fei's caveat cited in \S1 and of the exact series
evaluation $\mathrm{EK}_4(\tau_2)=40M_7$ (Theorem~\ref{thm:P1n4}), the
original numerical evidence was presumably series-sided. The same
phenomenon in a two-variable family --- failure of such identities
inside the critical region ``because its Deninger path is not
closed'' --- has been studied explicitly by Samart and Tao \cite{ST};
the present example appears to be the first three-variable instance.

\paragraph{The integrator.} The integrand is log-singular ($c=3$ lies
inside the critical image, \S\ref{sec:astroid}). After the Jensen
reduction in $z$, the inner integrand $g(t_1,t_2)=\sum_j\log^+|z_j|$
has period $\pi/2$ in $t_1$ and symmetry about $\pi/4$, so
$\m=(4/\pi^2)\int_0^{\pi/4}(\int g\,dt_2)\,dt_1$. The kink locus
$\{|z_j|=1\}$ of the $z$-Jensen reduction is detected \emph{exactly}:
the resultant of $z^4+Az+B$ against its conjugate reciprocal is a
degree-$8$ polynomial in $W=y^4$, $X=x^4$ (generated once by
\texttt{kink\_resultant\_W.py}), whose unit-circle roots locate every
kink; only two critical $t_1$ occur in $(0,\pi/4)$, namely
$T_{1A}=0.361367123906707805589029\ldots$ (where two kink pairs merge
simultaneously) and $T_{1B}=\pi/6$. The inner integral is evaluated by
tanh--sinh quadrature at degree $7$ (degree $7$ vs $8$:
$1.3\times10^{-25}$). The computation is implemented in
\texttt{n4\_81\_final.py} ($42$--digit arithmetic).

\section{Conclusion}

As literally stated, the conjecture $n_4(81)=40M_7$ is numerically
refuted (Section~\ref{sec:numerics}): the two sides differ by
$+0.0586706795972872\ldots$, five orders of magnitude above the
integration error floor. This note has archived its status:

\begin{itemize}
\item the $L$-value half is proved exactly:
$\mathrm{EK}_4(\tau_2)=40M_7$ (Theorem~\ref{thm:P1n4}), with complete
algebra ($\cL_2=\OO_K$, $\cL_1=(\bar\varpi)/2$,
$-T_1+4T_2=28L(g_7,3)$, exact $\zeta_K(2)$-cancellation) and machine
certification ($45$ checks, worst $2.7\times10^{-59}$);
\item the Mahler half is blocked twice: the critical image is the
two-dimensional astroid disc with $c=3$ interior
(Propositions~\ref{prop:astroid} and~\ref{prop:nopath},
Figure~\ref{fig:astroid}), and the $U$-series is on the wrong sheet of
$\mt$ everywhere below $\ImT\tau=1/\sqrt2$
(Figure~\ref{fig:departure});
\item the numerical comparison of the two sides at $s=81$
(Section~\ref{sec:numerics}) exhibits the wrong-sheet offset directly:
$n_4(81)-40M_7=+0.0586706795972872\ldots$, with integration error
$\le 2\times10^{-20}$.
\end{itemize}

The $L$-value half of the conjecture is thus proved exactly, while the
literal identity fails: $\mathrm{EK}_4(\tau_2)=40M_7$ is the value of
the $U$-series on the wrong sheet of $\mt$, not the true Mahler
measure. The open problem that remains is a closed form for the true
value $n_4(81)$, for which the methods of the companion paper
\cite{Zf} are structurally insufficient and regulator/monodromy
techniques \cite{HY,ZGQ} appear necessary.

\paragraph{Script inventory.}
\texttt{verify\_P1\_n4\_81.py} (P1 certification, $45$ checks);
\texttt{diag\_n4\_astroid.py} (go/no-go diagnostic, wrong-sheet
quantitative data); \texttt{gen\_fig\_n4.py} +
\texttt{n4\_departure.dat} (Figure~\ref{fig:departure}, this paper);
\texttt{kink\_resultant\_W.py} + \texttt{n4\_81\_final.py} (exact kink
detection and direct integration at $c=3$,
Section~\ref{sec:numerics}).
The main-paper scripts certifying the shared ingredients
(\eqref{eq:hecke}, \eqref{eq:FE}) are listed in \cite[\S6]{Zf}.
All scripts of this note, together with their run logs, are available
in the public repository
\url{https://github.com/huiminZheng-collab/samart-mahler}, archived at
\url{https://doi.org/10.5281/zenodo.21711884}.

\end{document}